\begin{document}

\title{Identifiability of multivariate logistic mixture models}

\author{Z.Q. Shi and T.R. Zheng and J.Q. Han}
\maketitle

\abstract{Mixture models have been widely used in modeling of continuous observations. For the possibility to estimate the parameters of a mixture model consistently on the basis of observations from the mixture, identifiability is a necessary condition. In this study, we give some results on the identifiability of multivariate logistic mixture models.}

\section{Introduction}
\label{intro}

Mixture models have provided a mathematical-based approach to the statistical modeling of data generated from some underlying source. Mixture models based probability density function (pdf) have been used successfully in a number of applications ranging from speaker recognition~\cite{reynolds2000speaker} to bioinformatics~\cite{blekas2005spatially}. Comprehensive surveys of estimation techniques, discussions, and applications are available in the work by~\cite{everitt1981finite,titterington1985statistical,mclachlan1988mixture,lindsay1995mixture,mclachlan1997algorithm,mclachlan2000finite,holzmann2006identifiability}.

The mixture of logistic distributions provides a more robust model to the fitting of normal mixture models, as observations that atypical of a component are given reduced weight in the calculation of its parameters, for logistic distribution has similar shape with the normal distribution but has heavier tails and high kurtosis as compared to the exponentially decaying tails of a Gaussian.

Malik and Abraham~\cite{malik1973multivariate} suggested two families of multivariate logistic distribution which are multivariate analogue of the single-dimensional logistic distribution. In this paper, we prove some properties on the identifiability of multivariate logistic mixtures.

%

Identifiability questions of mixtures must be settled before one can meaningfully discuss the problems of parameter estimation of a mixture model consistently on the basis of observations from the mixture. Identifiability makes sure that the correspondence between distributions and mixture parameter vectors is one to one. Lack of identifiability is a serious problem if we intend to classify future observations into one of several classes from the component distributions. Identifiability of mixtures has been discussed by several authors, including Teicher~\cite{teicher1963identifiability}, Yakowitz and Spragins~\cite{yakowitz1968identifiability}, AL-Hussaini and Ahmad~\cite{al2002identifiability}, Ahmad and AL-Hussaini~\cite{ahmad1982remarks}, Ahmad~\cite{ahmad1988identifiability}, and Holzmann et al.~\cite{holzmann2004identifiability}.

\section{Main results}
\label{main_results}
\subsection{Multivariate logistic mixture models}

Malik suggest two families of multivariate logistic distributions (MLD). We build mixture models on the first family~\cite{malik1973multivariate}. He defined the joint distribution function or cumulative distribution function (cdf) of $\mathbf{X}=(X_1, ..., X_p)^{T}$ for MLD as
 \begin{equation}
L(x_1, ..., x_p)=[1+\sum\nolimits_{k=1}^{p}\text{exp}(-x_k)]^{-1},
\end{equation}
and the density function as
\begin{equation}
\begin{split}
&l(x_1, ..., x_p)
\\&=p!\text{exp}(-\sum\nolimits_{k=1}^{p}x_k)[1+\sum\nolimits_{k=1}^{p}\text{exp}(-x_k)]^{-(p+1)},
\end{split}
\end{equation}
where $-\infty<x_k<\infty,k=1,2,...,p$. In order to build mixtures based on MLD, we introduce center $\mathbf{\mu}$ and scale $\mathbf{\Sigma}$ into the distribution and then the cdf turned into
\begin{equation}
L(x_1 ,....,x_p ;\mathbf{\mu} ,\mathbf{\Sigma} ) = [1 + \sum\nolimits_{k = 1}^p {\exp ( - (x_k  - \mu ^k )/\sigma ^k )} ]^{ - 1},
\end{equation}
and the density function turned into
\begin{equation}
\begin{split}
&l(x_1 ,....,x_p ;\mu ,\sum )
\\&= \left( {\prod\nolimits_{k = 1}^p {\sigma ^k } } \right)^{ - 1} \frac{{p!\exp ( - \sum\nolimits_{k = 1}^p {(x_k  - \mu ^k )/\sigma ^k } )}}{{[1 + \sum\nolimits_{k = 1}^p {\exp ( - (x_k  - \mu ^k )/\sigma ^k )} ]^{(p + 1)} }},
\end{split}
\end{equation}
where $\mathbf{\mu}=(\mu ^1 ,\mu ^2 ,...,\mu ^p)$ and scale $\mathbf{\Sigma}=(\sigma ^1 ,\sigma ^2 ,...,\sigma ^p)$. Here, we use the fact that
\begin{equation}
\int {f(x)}  = \int {\Sigma ^{ - 1} f((x - \mu )/\Sigma )dx}. \nonumber
\end{equation}
Let $s$ be the number of mixture components. Let $\mathbf{X}=(X_1, ..., X_p)^{T}$ be a vector valued random variable distributed  according to the following distribution function:
\begin{equation}
G(X;\mathbf{\eta})=\sum\nolimits_{i=1}^s{\pi _i L(X;\mathbf{\mu} _i, \mathbf{\Sigma} _i)},
\end{equation}
where $\mathbf{\eta}=(\mathbf{\pi}, \mathbf{\mu}, \mathbf{\Sigma} )$,  $\mathbf{\pi}=(\pi _1 ,\pi _2 ,...,\pi _s )$, $0<\pi _i<1$, $\sum\nolimits_{i=1}^s{\pi _i}=1$. For $i = 1,2,...,s$ let $\mathbf{\mu}  = (\mathbf{\mu} _1 ,...,\mathbf{\mu} _s )
$, $\mathbf{\Sigma}  = (\mathbf{\Sigma} _1 ,...,\mathbf{\Sigma} _s )$, and $\mathbf{\mu} _i  = (\mu _i^1 ,...,\mu _i^p )\in R^p$,  $\mathbf{\Sigma} _i  = (\sigma _i^1 ,...,\sigma _i^p ) \in (0,\infty )^p$. Then the parameter space is denoted by $\Gamma  = (0,1)^s  \times (R^p )^s  \times ((0,\infty )^p )^s$. $L( \bullet ;\mathbf{\mu} _i ,\mathbf{\Sigma} _i )$ is the cdf of the multivariate logistic $(\mathbf{\mu} _i ,\mathbf{\Sigma} _i )$-distribution.

\subsection{Identifiability}

Identifiability is a necessary condition for the possibility to estimate the parameters of a mixture model consistently on the basis of observations from the mixture. It makes sure that the correspondence between distributions and mixture parameter vectors is one to one.
Let $ \mathfrak{L}= \{ L(\mathbf{X};\mathbf{\mu} ,\mathbf{\Sigma} ):(\mathbf{\mu} ,\mathbf{\Sigma} ) \in \mathbf{\Theta} \}$ constitute the family of $p$-dimensional logistic distributions indexed by a parameter from the parameter space $\mathbf{\Theta}$, where $\mathbf{\Theta}  = R^p  \times (0,\infty )^p$. Let $ G \in \mathfrak{R}$ be a cdf on $\mathbf{\Theta}$. Then $H(\mathbf{X},G) = \int_\mathbf{\Theta}  {L(\mathbf{X};\mathbf{\mu} ,\mathbf{\Sigma} )dG(\mathbf{\mu} ,\mathbf{\Sigma} )}$ defines a mixture distribution. The distribution is called the  $G$-mixture of $\mathfrak{L}$. In this work we are dealing with finite mixtures, that is to say $\mathfrak{R}$ being the set of distributions on $\mathbf{\Theta}$ with finite support. Consequently, we write $G(\mathbf{\mu} ,\mathbf{\Sigma} )$ for the mixture proportion corresponding to  $L( \bullet ;\mathbf{\mu} _i ,\mathbf{\Sigma} _i )$. If $\mathbf{\Sigma}$ is fixed, then $ \mathfrak{L}$ becomes $ \mathfrak{L}_\mathbf{\Sigma} = \{ L_\mathbf{\Sigma}(\mathbf{X};\mathbf{\mu}):(\mathbf{\mu} ,\mathbf{\Sigma} ) \in \mathbf{\Theta}_\mathbf{\Sigma} \}$, where $\mathbf{\Theta}_\mathbf{\Sigma}  = R^p  \times \mathbf{\Sigma}$. Respectively let $\mathfrak{R}_\mathbf{\Sigma}$ being the set of distributions on $\mathbf{\Theta}_\mathbf{\Sigma}$ with finite support, and then $H(\mathbf{X},G_\mathbf{\Sigma}) = \int_\mathbf{\Theta}  {L_\mathbf{\Sigma}(\mathbf{X};\mathbf{\mu})dG_\mathbf{\Sigma}(\mathbf{\mu}})$ defines a mixture distribution on $ \mathfrak{L}_\mathbf{\Sigma}$, where $ G_\mathbf{\Sigma} \in \mathfrak{R}_\mathbf{\Sigma}$. Following Yakowitz and Spragins~\cite{yakowitz1968identifiability}, the mixture model generated by the family $\mathfrak{L}$ with mixing distribution in $\mathfrak{R}$ is said identifiable if for $G_1 ,G_2  \in \mathfrak{R}:H( \bullet ,G_1 ) = H( \bullet ,G_2 ) \Leftrightarrow G_1  = G_2$.
\newtheorem{thm}{Theorem}
\begin{thm}
\label{thm:main_results_1dim}
When $p=1$, the class $\mathfrak{L}$ with parameter set $\mathbf{\Theta}$ and mixing distribution in $\mathfrak{R}$ as defined above is identifiable.
\end{thm}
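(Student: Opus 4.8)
The plan is to reduce the identifiability statement to a statement about linear independence of the component cdfs, and then to establish that linear independence by passing to the complex plane and examining the poles of the logistic cdf.

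First I would invoke the criterion of Yakowitz and Spragins: the class of all finite mixtures drawn from $\mathfrak{L}$ is identifiable if and only if $\mathfrak{L}$ is a linearly independent set over $\mathbb{R}$. Indeed, if $G_1,G_2\in\mathfrak{R}$ give $H(\bullet,G_1)=H(\bullet,G_2)$, then collecting the finitely many support points of $G_1$ and $G_2$ and subtracting yields a relation $\sum_{i=1}^{n} c_i L(\bullet;\mu_i,\sigma_i)=0$ with distinct pairs $(\mu_i,\sigma_i)$, and conversely $G_1=G_2$ follows precisely when every such relation forces all $c_i=0$. Hence, for $p=1$, it would suffice to prove that whenever $(\mu_1,\sigma_1),\dots,(\mu_n,\sigma_n)$ are distinct and
\[
F(x):=\sum_{i=1}^{n}c_i\,\bigl[1+\exp(-(x-\mu_i)/\sigma_i)\bigr]^{-1}=0\qquad\text{for all }x\in\mathbb{R},
\]
then $c_1=\dots=c_n=0$.

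Next I would regard each summand as a meromorphic function of a complex variable $x$. The $i$-th term $L(x;\mu_i,\sigma_i)$ has simple poles exactly where $1+\exp(-(x-\mu_i)/\sigma_i)=0$, namely at $x=\mu_i+i\pi\sigma_i(2k+1)$, $k\in\mathbb{Z}$, each with residue $\sigma_i\neq0$. The decisive observation is that the imaginary part of such a pole, $\pi\sigma_i(2k+1)$, is governed solely by the scale $\sigma_i$, while its real part is the location $\mu_i$. Thus the poles nearest the real axis, those on the lines $\operatorname{Im}(x)=\pm\pi\sigma_{\min}$ with $\sigma_{\min}=\min_i\sigma_i$, come only from the components of smallest scale, and two distinct such components sit at distinct points because their locations differ. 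I would then conclude by peeling off scales: since $F$ is a finite sum of meromorphic functions it is meromorphic on $\mathbb{C}$, and since it vanishes on $\mathbb{R}$ it vanishes identically by the identity theorem, so $F$ can have no poles. Fixing a smallest-scale index $i$ and looking at $x_0=\mu_i+i\pi\sigma_{\min}$, every other term is holomorphic near $x_0$, so the principal part of $F$ there is $c_i\sigma_i/(x-x_0)$; as $F$ has no pole at $x_0$, we get $c_i=0$. Repeating over all smallest-scale indices removes that group, and an induction on the finitely many distinct scale values yields $c_i=0$ for every $i$.

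I expect the main obstacle to be exactly the separation of components whose scale parameters are rationally related. Working purely on the real line, one expands $L(x;\mu,\sigma)=\sum_{k\ge1}(-1)^{k-1}e^{-k\mu/\sigma}e^{kx/\sigma}$ as $x\to-\infty$ and tries to match coefficients of the exponentials $e^{\lambda x}$; but when two scales satisfy $\sigma_i/\sigma_j\in\mathbb{Q}$ the exponents $k/\sigma_i$ and $k'/\sigma_j$ collide, so distinct components contaminate the same coefficient and cannot be disentangled termwise. The passage to complex poles is precisely what removes this difficulty, since a pole's imaginary part pins down its scale exactly and the identity theorem forbids any surviving singularity; the remaining work is then the routine verification of the pole locations and residues asserted above.
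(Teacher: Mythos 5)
Your proposal is correct, but it reaches the key fact by a genuinely different route than the paper. The first step --- invoking Yakowitz and Spragins to reduce identifiability of the finite mixtures to linear independence of $\mathfrak{L}$ over $\mathbb{R}$ --- coincides exactly with the paper's proof of Theorem~\ref{thm:main_results_1dim} (which, incidentally, cites the multivariate Lemma~\ref{lemma:mLMMid} where it plainly needs the univariate Lemma~\ref{lemma:1LMMid}, since Lemma~\ref{lemma:mLMMid} assumes a common scale). The difference lies entirely in how that linear independence is proved. The paper's Lemma~\ref{lemma:1LMMid} stays on the real line: it sends $x \to \infty$ to get $\sum_i d_i = 0$, then splits into cases --- all scales equal (iteratively multiply by $1+\exp((x-\mu_i)/\sigma_i)$ and differentiate) versus a strictly smallest group of equal scales (multiply by $\exp(x/\sigma)$ with $\sigma$ small, differentiate $j$ times, let $x\to\infty$ to obtain $\sum_{i=1}^{k} d_i \exp(j\mu_i/\sigma_i)=0$ for $j=1,\dots,k$, and invoke Vandermonde). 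Your argument instead continues each cdf meromorphically, locates the simple poles at $\mu_i + i\pi\sigma_i(2k+1)$ with residue $\sigma_i$, uses the identity theorem to force every principal part of $F$ to vanish, and observes that on the lines $\operatorname{Im} x = \pm\pi\sigma_{\min}$ each pole belongs to exactly one component; peeling off scale layers then finishes by induction on the number of distinct scales. Your pole bookkeeping checks out: a component with $\sigma_j > \sigma_{\min}$ could only contribute a pole at height $\pi\sigma_{\min}$ if $\sigma_{\min}/\sigma_j$ were a positive odd integer, which is impossible, and two equal-scale components have distinct locations, hence distinct poles there. What your route buys is uniformity: the scale of a component is read off from the imaginary part of its lowest pole, so the paper's case analysis, its iterated multiply-and-differentiate manipulations, and the rational-scale collisions you describe never arise. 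What the paper's route buys is elementarity: nothing beyond real limits, derivatives, and a Vandermonde determinant, at the cost of a longer and more delicate computation.
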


\begin{thm}
\label{thm:main_results_pdim}
When $p>1$, the class $\mathfrak{L}_\mathbf{\Sigma}$ with parameter set $\mathbf{\Theta}_\mathbf{\Sigma}$ and mixing distribution in $\mathfrak{R}_\mathbf{\Sigma}$ is identifiable.
\end{thm}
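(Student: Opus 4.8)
The plan is to exploit the fact that, with $\mathbf{\Sigma}$ held fixed, $\mathfrak{L}_\mathbf{\Sigma}$ is a pure location family, so the Fourier transform linearises the dependence on the centers and reduces identifiability to the linear independence of exponential characters. Concretely, suppose $G_{1,\mathbf{\Sigma}},G_{2,\mathbf{\Sigma}}\in\mathfrak{R}_\mathbf{\Sigma}$ satisfy $H(\bullet,G_{1,\mathbf{\Sigma}})=H(\bullet,G_{2,\mathbf{\Sigma}})$, and write their finite supports as centers $\mathbf{\mu}_1,\dots,\mathbf{\mu}_m$ with weights $\pi_1,\dots,\pi_m$ and $\mathbf{\mu}'_1,\dots,\mathbf{\mu}'_n$ with weights $\pi'_1,\dots,\pi'_n$. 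Equality of the two mixture distributions forces equality of the corresponding densities almost everywhere, hence equality of their characteristic functions at every point of $R^p$.

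The first step is to record the transform of a single component. Substituting $z=\mathrm{diag}(\mathbf{\Sigma})^{-1}(x-\mathbf{\mu})$ in the defining integral shows that $\widehat{l}(t;\mathbf{\mu},\mathbf{\Sigma})=e^{\,i\,t\cdot\mathbf{\mu}}\,\phi(t)$, where $\phi(t):=\widehat{h}(\mathrm{diag}(\mathbf{\Sigma})t)$ is the characteristic function of the standardised $p$-dimensional logistic density $h$ and is therefore independent of the center. Summing over the mixture gives
\[
\phi(t)\Big(\sum_{i=1}^m \pi_i\,e^{\,i\,t\cdot\mathbf{\mu}_i}-\sum_{j=1}^n \pi'_j\,e^{\,i\,t\cdot\mathbf{\mu}'_j}\Big)=0
\]
for all $t\in R^p$. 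Since $\phi$ is a characteristic function it is continuous with $\phi(0)=1$, so $\phi(t)\neq 0$ on some open neighbourhood $U$ of the origin; on $U$ the bracketed difference of exponential sums must vanish. Both exponential sums extend to entire functions of $t$, so their agreement on the open set $U$ forces them to agree throughout the (connected) domain $R^p$ by the identity theorem.

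The second step is to conclude from $\sum_i \pi_i e^{\,i\,t\cdot\mathbf{\mu}_i}=\sum_j \pi'_j e^{\,i\,t\cdot\mathbf{\mu}'_j}$ that the two mixing distributions coincide. After cancelling any centers common to both lists, it suffices to prove that distinct characters $\{e^{\,i\,t\cdot\mathbf{\mu}}\}$ are linearly independent as functions of $t$. I would reduce this to the classical one-dimensional fact by choosing a direction $v\in R^p$ for which the scalars $v\cdot\mathbf{\mu}$ are pairwise distinct over all centers appearing (a generic $v$ works), restricting to the line $t=sv$, and invoking the linear independence of $\{e^{\,i\,s\lambda}\}$ for distinct frequencies $\lambda$ through a Vandermonde/Wronskian argument. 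This forces every remaining coefficient to be zero, which is impossible unless the two lists of centers and weights coincide, i.e.\ $G_{1,\mathbf{\Sigma}}=G_{2,\mathbf{\Sigma}}$.

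The main obstacle is the passage from the transformed identity back to equality of the mixing measures, namely establishing the linear independence of the exponential characters and controlling the zero set of $\phi$. The latter is defused by the observation that we only need $\phi\neq 0$ near the origin and can then propagate the identity by analyticity, so that no explicit closed form for the multivariate logistic characteristic function is required; the former is the genuinely substantive ingredient and is what makes the reduction to distinct centers decisive.
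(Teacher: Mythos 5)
Your proof is correct, and it takes a genuinely different route from the paper's. The paper obtains Theorem~2 by combining the Yakowitz--Spragins linear-independence criterion with its Lemma~2, which asserts that no nontrivial finite linear combination of the cdfs $[1+\sum_{k}\exp(-(x_k-\mu_i^k)/\sigma^k)]^{-1}$ with distinct centers and common scale can vanish identically; Lemma~2 is proved by an inductive dimension reduction (a substitution $x_1=\sigma^1 x-y_1^0$, $x_2=\sigma^2 x-y_2^0$ collapsing two coordinates into one, with $(y_1^0,y_2^0)$ chosen so that distinct centers remain distinct after collapsing), the base case $p=1$ being Lemma~1 with its limit/derivative manipulations and Vandermonde system. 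You instead exploit the observation that fixing $\mathbf{\Sigma}$ makes $\mathfrak{L}_\mathbf{\Sigma}$ a pure location family, so each component's characteristic function factors as $e^{\,i\,t\cdot\mathbf{\mu}}\phi(t)$ with $\phi$ independent of the center; non-vanishing of $\phi$ near the origin plus real-analyticity of finite exponential sums propagates the identity to all of $R^p$, and linear independence of distinct characters (via a generic direction and a one-dimensional Vandermonde argument) finishes the proof. Each step of your argument is sound: the factorisation is exactly the location-family property, the identity-theorem step is legitimate because a real-analytic function on the connected set $R^p$ vanishing on an open subset vanishes identically (and this step is genuinely needed, since characteristic functions can vanish on sets with interior), and the generic-direction reduction avoids hyperplanes $\{v: v\cdot(\mathbf{\mu}_a-\mathbf{\mu}_b)=0\}$, of which there are finitely many. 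Two remarks on what each approach buys. First, your proof directly contradicts the paper's claim that ``the usual approaches based on moment generating function, characteristic function and Laplace transforms can not be applied to this situation'': for Theorem~2 the transform method works precisely because the scale is common to all components; the paper's claim has force only for Theorem~1, where $\sigma$ varies across components and the transform no longer splits into a fixed function times a character. Second, your route is arguably more robust: it needs no closed form for the multivariate logistic characteristic function and it sidesteps the paper's delicate (and rather sketchily justified) construction of the collapsing point $(y_1^0,y_2^0)$, whereas the paper's Lemma~1 machinery has the separate merit of handling varying scales in one dimension, which your factorisation trick does not address.
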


The usual approaches based on moment generating function, characteristic function and Laplace transforms~\cite{teicher1963identifiability, yakowitz1968identifiability, al2002identifiability} can not be applied to this situation.  Individual methods are needed. We first prove two lemmas which will be used in the proof of our theorems.

\newtheorem{lem}{Lemma}
\begin{lem}
\label{lemma:1LMMid}
Assume that
\begin{equation}
\label{eq:1LMM}
\sum\nolimits_{i = 1}^s {d_i [1 + \exp ( - (x - \mu _i )/\sigma _i )]^{ - 1} }  \equiv 0,
\end{equation}
if $( \mu _i, \sigma _i )$, $i = 1,...,s$ are distinct and $\sigma _i  > 0$, $i = 1,...,s$, then $d _i  = 0$, $i = 1,...,s$.
\end{lem}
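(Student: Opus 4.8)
The plan is to regard $x$ as a complex variable and exploit the meromorphic structure of each summand. Write $F_i(x)=[1+\exp(-(x-\mu_i)/\sigma_i)]^{-1}$ and $g(x)=\sum_{i=1}^s d_iF_i(x)$. Each $F_i$ is meromorphic on $\mathbb{C}$, hence so is $g$, and its only possible singularities are isolated poles lying off the real axis. Since by hypothesis $g\equiv 0$ on $\mathbb{R}$, and $\mathbb{C}$ minus a discrete set is connected, the identity theorem forces $g\equiv 0$ on the whole connected domain obtained by deleting the poles; consequently no genuine pole can survive, i.e. at every point the residues contributed by the various $F_i$ must cancel. The proof then reduces to exhibiting, for each $i$, a pole that is \emph{private} to the $i$-th term, so that the residue condition there reads $d_i\cdot(\text{residue})=0$.

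Next I would locate the poles. Solving $1+\exp(-(x-\mu_i)/\sigma_i)=0$ gives, with $\iota=\sqrt{-1}$, the simple poles $x=\mu_i+\iota\pi\sigma_i(2k+1)$, $k\in\mathbb{Z}$; differentiating the denominator at such a point shows each residue equals $\sigma_i\neq 0$. Thus the poles of the $i$-th term lie on the vertical line $\mathrm{Re}\,x=\mu_i$ at imaginary heights that are odd integer multiples of $\pi\sigma_i$. Two features matter here: components sharing a common scale but with different centers have poles on different vertical lines and so never interfere; and the pole of the $i$-th term nearest the real axis is $x_i^{\ast}=\mu_i+\iota\pi\sigma_i$.

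The separation is then carried out by induction on the distinct values of the scale, taken in increasing order. At the stage where $\sigma'$ is the smallest scale among the terms not yet eliminated, fix any such term $i$ (so $\sigma_i=\sigma'$) and examine its lowest pole $x_i^{\ast}=\mu_i+\iota\pi\sigma'$. A term $j$ still in play has $\sigma_j\ge\sigma'$, so a pole $\mu_j+\iota\pi\sigma_j(2k+1)$ of it can meet $x_i^{\ast}$ only if $\mu_j=\mu_i$ and $\sigma_j(2k+1)=\sigma'$ with $2k+1\ge 1$; since $\sigma_j\ge\sigma'$ this forces $\sigma_j=\sigma'$ and $2k+1=1$, and together with the distinctness of the pairs $(\mu_j,\sigma_j)$ it gives $j=i$. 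Terms already eliminated contribute zero residue. Hence the cancellation condition at $x_i^{\ast}$ collapses to $d_i\sigma_i=0$, so $d_i=0$; running this over every $i$ at the current scale and then passing to the next scale completes the induction and yields $d_i=0$ for all $i$.

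The step I expect to be the genuine obstacle is precisely the possibility that poles of different components coincide: this does occur when two centers agree and the two scales are in an odd-integer ratio (for instance $\mu_i=\mu_j$, $\sigma_j=3\sigma_i$ aligns the first pole of the $i$-th term with the third pole of the $j$-th term), so one cannot simply declare every component's nearest pole to be isolated. Routing the argument through the smallest surviving scale is what defeats this, since an odd multiple of a strictly larger scale can never drop down to a smaller one. The remaining care is bookkeeping: verifying that the residue is nonzero at \emph{every} pole, not merely the nearest, and justifying the analytic continuation on $\mathbb{C}$ minus the discrete pole set before invoking the identity theorem.
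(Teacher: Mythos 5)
Your proof is correct, and it follows a genuinely different route from the paper's. The paper argues entirely on the real line: it first lets $x\to\infty$ to get $\sum_i d_i=0$ and flips the sign of the exponent, then splits into two cases --- when all scales coincide it repeatedly multiplies by $1+\exp((x-\mu_i)/\sigma_i)$ and differentiates, eliminating one coefficient at a time; when they do not, it multiplies by growing exponentials $\exp(x/\sigma)$, passes to limits, and assembles a Vandermonde system $\sum_{i=1}^{k}d_i\exp(j\mu_i/\sigma_i)=0$, $j=1,\dots,k$, for the smallest-scale group. You instead continue each summand meromorphically to $\mathbb{C}$, apply the identity theorem on the complement of the discrete pole set, and read off each $d_i$ from the residue at a pole private to the $i$-th component. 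Your route has real advantages: it needs no case split between equal and unequal scales; it encodes both parameters simultaneously in the pole lattice $\mu_i+\iota\pi\sigma_i(2k+1)$, so distinctness of the pairs $(\mu_i,\sigma_i)$ enters exactly once; and the collision phenomenon you single out as the crux (poles of different components coinciding when centers agree and scales are in odd-integer ratio) is defeated outright by processing scales from the smallest surviving one upward, since an odd positive multiple of a scale $\sigma_j\ge\sigma'$ equals $\sigma'$ only if $\sigma_j=\sigma'$ and the multiple is $1$. The analogous interference is also the delicate point of the real-asymptotic route, in mirror image: expanding $[1+e^{(x-\mu_j)/\sigma_j}]^{-1}=\sum_{n\ge1}(-1)^{n+1}e^{-n(x-\mu_j)/\sigma_j}$, an $n$-th order term of one scale can share its decay rate with a first-order term of another whenever scales are rationally related, and it is the largest scale (slowest decay), not the smallest, whose terms dominate as $x\to\infty$; so the growth-rate bookkeeping there requires care, whereas your residue separation is exact. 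What the paper's approach buys in exchange is elementarity --- it never leaves the real line. If you write yours up formally, make one point explicit: since every pole is simple, removability of a shared singularity of $g$ is equivalent to the vanishing of $\sum_{j}d_j\sigma_j$ summed over the components whose pole lattice passes through that point; this is precisely the relation your induction exploits.
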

\begin{proof}
We proof this lemma by contradiction. Assume that $d_i  \ne 0,i = 1,...,s$. Let $x \to \infty$ in Eqn.~(\ref{eq:1LMM}), we obtain $
\sum\limits_{i = 1}^s {d_i }  = 0$. Thus with Eqn.~(\ref{eq:1LMM}), we have
\begin{equation}
\label{eq:-1LMM}
\sum\nolimits_{i = 1}^s {d_i [1 + \exp ( (x - \mu _i )/\sigma _i)]^{ - 1} }  \equiv 0.
\end{equation}
If now $\sigma _1  = ... = \sigma _s$, then $\mu _i ,i = 1,...,s$ are distinct. Multiply both sides of Eqn.~(\ref{eq:-1LMM}) by $1 + \exp ((x - \mu _1 )/\sigma _1 )$ and take the derivative w.r.t. $x$, we obtain
\begin{equation}
\label{eq:-1LMMiterStart}
\begin{split}
&\sum\nolimits_{i = 2}^s {d_i [\exp ( - \mu _1 /\sigma _1 ) - \exp ( - \mu _i /\sigma _i )][1 + \exp ((x - \mu _i )/\sigma _i )]^{-2} } \\ &\equiv 0.
\end{split}
\end{equation}
Then iteratively multiply both sides of Eqn.~(\ref{eq:-1LMMiterStart}) by $1 + \exp ((x - \mu _i )/\sigma _i ),i = 2,...,s$ and derive respect to $x$ similar to the process applied to Eqn.~(\ref{eq:-1LMM}) above, we have $d_s (\prod\nolimits_{i = 1}^{s - 1} {[\exp (- \mu _i /\sigma _i ) - \exp ( - \mu _1 /\sigma _1 )]} )[1 + \exp (\frac{{x - \mu _s }}{{\sigma _s }})]^{-s}  \equiv 0$. Then we obtain $d_s  = 0$ which contradicts to the assumption.

On the other hand, without loss of generality, assume $\sigma _1  = \sigma _2  = ... = \sigma _k  < \sigma _{k + 1}  \le ... \le \sigma _s
$.  Multiply both sides of Eqn.~(\ref{eq:-1LMM}) by $\exp (x/\sigma)$, where $\sigma  < \sigma _1$, and let $x \to \infty$, we have $\sum\limits_{i = 1}^k {d_i \exp (\mu _i /\sigma _i ) = 0}$. Derive both sides of Eqn.~(\ref{eq:-1LMM}) with respect to $x$, we have
 \begin{equation}
 \label{eq:1derive}
\begin{split}
\sum\nolimits_{i = 1}^s ({d_i /\sigma _i [1 + \exp ((x - \mu _i )/\sigma _i )]^{ - 1} }
\\-  {d_i /\sigma _i [1 + \exp ((x - \mu _i )/\sigma _i )]^{ - 2} })  \equiv 0.
\end{split}
\end{equation}
Multiply both sides by $\exp (x/\sigma)$, where $\sigma  < \sigma _1 /2$, and let $x \to \infty$, we have $\sum\limits_{i = 1}^k {d_i \exp (2\mu _i /\sigma _i ) = 0}$. Then similarly, derive  \label{eq:1derive} $j$ times with respect to $x$, and multiply by $\exp (x/\sigma)$, where $\sigma  < \sigma _1 /j, j=2,...,k$. Then let $x \to \infty$, we obtain $\sum\limits_{i = 1}^k {d_i \exp (j\mu _i /\sigma _i ) = 0}$. Thus we obtain
 \begin{equation}
\left\{ \begin{array}{l}
 \sum\limits_{i = 1}^k {d_i \exp (\mu _i /\sigma _i ) = 0}  \\
 \sum\limits_{i = 1}^k {d_i \exp (2\mu _i /\sigma _i ) = 0}  \\
 ... \\
 \sum\limits_{i = 1}^k {d_i \exp (k\mu _i /\sigma _i ) = 0}  \\
 \end{array} \right.
\end{equation}
From Vandermonde's rule, we have $d _i  = 0$, $i = 1,...,k$ which contradict to the hypothesis. Thus the lemma is proved.
\end{proof}

\begin{lem}
\label{lemma:mLMMid}
Assume that
\begin{equation}
\label{eq:3LMM}
\sum\nolimits_{i = 1}^s {d_i [1 + \sum\nolimits_{k = 1}^p {\exp ( - (x_k  - \mu _i^k )/\sigma^k )} ]^{ - 1} }  \equiv 0,
\end{equation}
if $ \mathbf{\mu} _i $, $i = 1,...,s$ are distinct, where $\mathbf{\mu} _i  = (\mu _i^1 ,...,\mu _i^p )$, $\mu _i  \in R^p, \mathbf{\Sigma}  = (\sigma^1 ,...,\sigma^p )$, and $\mathbf{\Sigma} \in (0,\infty )^p$ then $d _i  = 0$, $i = 1,...,s$.
\end{lem}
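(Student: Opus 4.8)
The plan is to reduce the multivariate identity to the univariate Lemma~\ref{lemma:1LMMid} by an exponential change of variables followed by restriction to a single ray. First I would substitute $a_i^k = \exp(\mu_i^k/\sigma^k)$ and $t_k = \exp(-x_k/\sigma^k)$. As each $x_k$ ranges over $\mathbb{R}$, the corresponding $t_k$ ranges over all of $(0,\infty)$, and Eqn.~(\ref{eq:3LMM}) becomes
\[
\sum\nolimits_{i=1}^s \frac{d_i}{1 + \sum\nolimits_{k=1}^p a_i^k t_k} \equiv 0, \qquad t=(t_1,\dots,t_p)\in(0,\infty)^p.
\]
Because $\mu\mapsto\exp(\mu^k/\sigma^k)$ is injective in each coordinate and the $\mathbf{\mu}_i$ are distinct, the vectors $a_i=(a_i^1,\dots,a_i^p)\in(0,\infty)^p$ are pairwise distinct. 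The problem is thus reduced to the linear independence of the rational functions $t\mapsto (1+\langle a_i,t\rangle)^{-1}$ attached to distinct points $a_i$ with positive coordinates.

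Next I would collapse the $p$ variables down to one by restricting to a ray $t=\tau c$ with $\tau>0$ and a fixed direction $c\in(0,\infty)^p$. Writing $b_i=\sum\nolimits_{k=1}^p a_i^k c_k=\langle a_i,c\rangle>0$, the identity specializes to $\sum\nolimits_{i=1}^s d_i(1+b_i\tau)^{-1}\equiv 0$ for all $\tau>0$. Setting $\tau=\exp(-x)$ rewrites each summand as $d_i[1+\exp(-(x-\log b_i))]^{-1}$, which is exactly the form of Eqn.~(\ref{eq:1LMM}) with common scale $\sigma_i\equiv 1$ and centers $\log b_i$. Provided the $b_i$ are pairwise distinct, the centers $\log b_i$ are distinct and all scales are positive, so Lemma~\ref{lemma:1LMMid} applies directly and yields $d_i=0$ for every $i$.

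The crux of the argument, and the step I expect to be the main obstacle, is guaranteeing the existence of a direction $c\in(0,\infty)^p$ for which the projections $b_i=\langle a_i,c\rangle$ are all distinct. For each pair $i\neq j$ the set of $c$ with $\langle a_i,c\rangle=\langle a_j,c\rangle$ is the hyperplane $\{c:\langle a_i-a_j,c\rangle=0\}$, which is proper since $a_i-a_j\neq 0$ by distinctness of the $a_i$. There are only finitely many such pairs, so the union of these hyperplanes is a closed set with empty interior; since $(0,\infty)^p$ is a nonempty open set, it cannot be contained in this union, and any $c$ lying in the complement realizes distinct $b_i$. Once such a $c$ is fixed, the two reductions above are purely mechanical, and the entire burden of the proof is carried by the univariate Lemma~\ref{lemma:1LMMid} already established. (Alternatively, the reduced one-variable identity $\sum\nolimits_{i=1}^s d_i(1+b_i\tau)^{-1}\equiv 0$ with distinct $b_i$ could be dispatched on its own by clearing denominators and evaluating the resulting polynomial identity at $\tau=-1/b_i$, a Vandermonde-type argument parallel to the one used in Lemma~\ref{lemma:1LMMid}.)
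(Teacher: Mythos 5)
Your proof is correct, but it takes a genuinely different route from the paper's. The paper proves the lemma by induction on $p$ (with $s$ fixed): it substitutes $x_1 = \sigma^1 x - y_1$, $x_2 = \sigma^2 x - y_2$ and argues that some shift $(y_1^0, y_2^0)$ makes the merged quantities $\exp((y_1^0+\mu_i^1)/\sigma^1) + \exp((y_2^0+\mu_i^2)/\sigma^2)$ distinct whenever $(\mu_i^1,\mu_i^2)\neq(\mu_j^1,\mu_j^2)$; this collapses two coordinates into one and reduces dimension $p$ to $p-1$, the existence of the good shift being established by a contradiction argument involving a double sequence $(y_1^{h,k}, y_2^h)$. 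You instead collapse all $p$ coordinates in a single stroke: after the exponential change of variables $t_k=\exp(-x_k/\sigma^k)$, you restrict to a generic ray $t=\tau c$, justifying genericity by the observation that the degenerate directions form a finite union of proper hyperplanes, which cannot cover the nonempty open set $(0,\infty)^p$. The two arguments share the same underlying idea --- a generic merging of coordinates so that distinct $\mathbf{\mu}_i$ yield distinct one-dimensional centers --- but yours avoids the induction entirely, replaces the paper's rather opaque sequence-based existence argument with a clean and fully rigorous hyperplane-avoidance argument, and lands in the equal-scale case of Lemma~\ref{lemma:1LMMid}; indeed your partial-fractions alternative (clearing denominators and evaluating at the poles $\tau=-1/b_i$) bypasses Lemma~\ref{lemma:1LMMid} altogether. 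Note that both proofs exploit in the same essential way that the lemma fixes a single scale vector $\mathbf{\Sigma}$ common to all components --- this is what makes the terms merge multiplicatively --- so neither extends as stated to component-dependent scales; your version simply makes that dependence more visible.
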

\begin{proof}
We give a proof by inductive on $s$ and $p$. When $s=1$, it is trivial and $p=1$, it is Lemma~\ref{lemma:1LMMid}. Setting $s > 1$ fixed, let $p > 1$ and assume $p-1$ is true.
Let $x_1  = \sigma^1 x - y_1 ,x_2  = \sigma^2 x - y_2$, then we can find $(y_1^0,y_2^0)$ satisfies that if $(\mu _i^1 ,\mu _i^2 ) \ne (\mu _j^1 ,\mu _j^2 )$, then $\exp ((y_1^0  + \mu _i^1)/\sigma^1) + \exp ((y_2^0  + \mu _i^2 )/\sigma^2) \ne \exp ((y_1^0  + \mu _j^1 )/\sigma^1 ) + \exp ((y_2^0  + \mu _j^2)/\sigma^2)$. Otherwise, for each $(y_1,y_2)$, there exists a pair $(i,j)$ that$( \mu _i^1 ,\mu _i^2 ) \ne (\mu _j^1 ,\mu _j^2 )$, and $\exp ((y_1  + \mu _i^1)/\sigma ^1) + \exp ((y_2  + \mu _i^2 )/\sigma^2) = \exp ((y_1  + \mu _j^1 )/\sigma^1 ) + \exp ((y_2  + \mu _j^2)/\sigma ^2)$.
Then there must exists a pair $(i,j)$ and a two dimensional series $(y_1^{h,k},y_2^{h})$ where $\mathop {\lim }\limits_{h \to \infty } y_2^h  = \infty$, and for fixed $h$, $\mathop {\lim }\limits_{k \to \infty } y_1^{h,k}  = \infty$, that $(\mu _i^1 ,\mu _i^2 ) \ne (\mu _j^1 ,\mu _j^2 )$, and
\begin{equation}
\label{eq:condition}
\begin{split}
\exp ((y_1^{h,k}  + \mu _i^1)/\sigma^1) + \exp ((y_2^h  + \mu _i^2 )/\sigma^2) \\
 = \exp ((y_1^{h,k}  + \mu _j^1 )/\sigma^1 ) + \exp ((y_2^h  + \mu _j^2)/\sigma^2).
 \end{split}
\end{equation}
Let $h$ fixed, and $k \to \infty$ in Eqn.~\ref{eq:condition}, we obtain $\mu _i^1=\mu _j^1$. Then let $h \to \infty$, we have $\mu _i^2=\mu _j^2$, which contradict to the assumptions.

So let $x_1  = \sigma^1 x - y_1^0 ,x_2  = \sigma^2 x - y_2^0$ in Eqn.~\ref{eq:3LMM}, then it is reduced to the situation $p-1$. Thus the lemma is proved.
\end{proof}

\textbf{Proof of Theorem~\ref{thm:main_results_1dim}}:
\begin{proof}
According to the theorem of~\cite{yakowitz1968identifiability}, Theorem~\ref{thm:main_results_1dim} is equivalent to the linear independence of $\mathfrak{L}$ over the field of real numbers which can be inferred immediately from Lemma~\ref{lemma:mLMMid}. Thus when $p=1$ the class $\mathfrak{L}$ with parameter set $\Theta$ and mixing distribution in $\mathfrak{R}$ as defined above is identifiable.
\end{proof}

\textbf{Proof of Theorem~\ref{thm:main_results_pdim}}:
\begin{proof}
Similar with the proof above, from Lemma~\ref{lemma:mLMMid} and the theorem of~\cite{yakowitz1968identifiability}, Theorem~\ref{thm:main_results_pdim} is obvious.
\end{proof}

\textbf{Open Problem}: When $p>1$, the class $\mathfrak{L}$ with parameter set $\mathbf{\Theta}$ and mixing distribution in $\mathfrak{R}$ is identifiable.

\section{Conclusions}
\label{sec:Conclusions}
In this work, we examine the identifiability property of logistic mixture models. We first proof that the univariate logistic mixture models is identifiable. Then we also proof the identifiability for a special case of the multivariate logistic mixture models. At last we propose a conjecture about identifiability of the multivariate logistic mixture models.


\vskip3pt
{\bf ACKNOWLEDGEMENTS.} This work was partly supported by the National Natural Science Foundation of China under grant No. 91120303 and No. 61071181.

\vskip5pt

\noindent Z.Q. Shi and T.R. Zheng and J.Q. Han 
\vskip3pt

\noindent E-mail: shiziqiang7@gmail.com; shiziqiang@cn.fujitsu.com

\end{document}